\theoremstyle{plain}
\newtheorem{theorem}{Theorem}
\newtheorem{lemma}[theorem]{Lemma}
\newtheorem{algorithm}[theorem]{Algorithm}
\newtheorem{remark}[theorem]{Remark}
\numberwithin{equation}{section}
\newcommand{\bisect}[1]{\ensuremath{{\textsf{\rm BISECT}(#1)}}}
\newcommand{\refine}[2]{\ensuremath{{\textsf{\rm REFINE}(#1; #2)}}}
\newcommand{\dx}{\ensuremath{\mathrm{d}x}}
\newcommand{\defas}{:=}
\newcommand{\abs}[1]{\ensuremath{\left|#1\right|}}
\providecommand{\tria}{\ensuremath{\mathcal{T}}}
\providecommand{\trias}{\ensuremath{{\mathcal{T}_\star}}}
\providecommand{\bbT}{\mathbb{T}}
\providecommand{\settmp}[2]{{#1\{{#2}#1\}}}
\providecommand{\set}[1]{\settmp{}{#1}}
\newcommand{\setN}{\mathbb{N}}
\newcommand{\setR}{\mathbb{R}}
\begin{document}

\title[On the threshold condition for D\"orfler marking]{On the threshold condition for \\D\"orfler marking}
\author[L.~Diening]{Lars Diening}
\address{Universit\"at Bielefeld \\ Fakult{\"a}t f{\"u}r Mathematik\\
  D-33615 Bielefeld \\ Germany}%
\email{lars.diening@uni-bielefeld.de}
\author[C.~Kreuzer]{Christian Kreuzer}
\address{TU Dortmund \\ Fakult{\"a}t f{\"u}r Mathematik \\ D-44221 Dortmund \\ Germany}
\email{christian.kreuzer@tu-dortmund.de}

\keywords{Adaptive finite element methods, optimal complexity,
  convergence, D\"orfler marking}

\subjclass[2010]{65N50, 65N15, 41A25}

\begin{abstract}
  It is an open question if the threshold condition
  $\theta < \theta_\star$ for the D\"orfler marking parameter is
  necessary to obtain optimal algebraic rates of adaptive finite element methods. We present a
  (non-PDE) example fitting into the common abstract convergence
  framework (axioms of adaptivity) 
  and which is potentially converging with exponential rates. 
  However, for D\"orfler marking $\theta > \theta_\star$ the
  algebraic converges rate can be made arbitrarily small.
\end{abstract}

\thanks{The research of Lars Diening was partially supported by the
  DFG through the CRC 1283}

\maketitle
 
 
 \section{Introduction}
 \label{sec:introduction}

In the pioneering work \cite{Stevenson:07} of Stevenson proved rate
optimality of the standard adaptive finite element method for the
Poisson problem. In contrast to the prior result~\cite{BiDaDeV:04},
which used optimal coarsening based on fast tree
approximation~\cite{BiDeV:04}, Stevenson proved optimality of the
refinement based on D\"orfler marking totally avoiding
coarsening as is standard for stationary problems. One main 
ingredient of the proof is a local estimate, which bounds the distance
of discrete solutions on nested meshes relative to the refined
elements only. As a consequence each refinement that ensures some
error reduction must satisfy a D\"orfler marking condition.  This is
the key in the optimality proof of \cite{Stevenson:07}, since it
allows to compare the adaptive refinement strategy with optimal
refinements when the adaptive marking parameter is below a certain
threshold depending on the ratio of the efficiency and reliability
constants of the a posteriori estimator.

During the last decade, the
approach of Stevenson became very popular and was further developed
e.g. in~\cite{CaKrNoSi:08, DieningKreuzer:2008,%
  BelenkiDieningKreuzer:2012,%
  CasconNochetto:11,KreuzerSiebert:11,%
  FeiFuehPraet:2014,FeischlKarkulikMelenkPraetorius:13,%
  CarstensenPeterseimRabus:2013,BonitoNochetto:2010,Tsogtgerel:2013}.
For and overview of the topic see e.g. the monographs
\cite{NochettoVeeser:12,NoSiVe:09} and for a more exhaustive list of
related works, we refer the reader to \cite{CarFeiPagPra14}.

It is, however, still open whether or not the threshold for the
marking is sharp in general or if it is only a technical artefact.
In~\cite{CarFeiPagPra14}, Carstensen, Feischl, Page, and Praetorius
presented an unifying axiomatic approach for proving rate optimality
of adaptive finite element methods. Therein, the threshold is obtained
with a slightly different technique (compare with
Section~\ref{sec:threshold}) and it is stated that
\smallskip
\begin{center}
  \begin{minipage}{.9\linewidth}
   \textit{\ldots the upper bound for adaptivity parameters which guarantee
   quasi-optimal convergence rates, is independent of the efficiency
   constant.  Such an observation might be a first step to the
   mathematical understanding of the empirical observation that each
   adaptivity parameter \(0<\theta\le0.5\)
   yields optimal convergence rates in the asymptotic regime.}
  \end{minipage}
\end{center}
\smallskip
In this work, we present an example, which satisfies the abstract
axioms of adaptivity from~\cite{CarFeiPagPra14,CarRab17} but also
efficiency, whichs allows to apply the original techniques from~\cite{Stevenson:07}.
However, for any \(\theta\in(0,1)\), we can adjust the parameters such
that, although exponential convergence is possible, the adaptive loop with
D\"orfler marking fails to have arbitrary bad algebraic rates.  It turns out
that, the threshold parameter of~\cite{CarFeiPagPra14,CarRab17} not
involving efficiency suggests for our example values, which are
slightly too conservative. The threshold parameter
of~\cite{Stevenson:07} however, which is the ratio of the reliability
and efficiency constant, is sharp.

We emphasise that the example is not within the context of finite
element discretisations of partial differential equations (compare
also with Remark~\ref{rem:context}). Therefore, technically speaking,
we cannot claim any conclusions for the relevant practical cases,
however, we clarify that the threshold condition can neither be
avoided nor significantly improved within the axiomatic framework
of~\cite{CarFeiPagPra14,CarRab17} even when relying on
efficiency as in~\cite{Stevenson:07}.  Notice that our example also confirms the result of
\cite{DieningKreuzerStevenson:2016,KreuzerSchedensack:2016} that the
maximum marking strategy is more robust in the sense that it provides
optimal convergence rates without restriction on the maximum marking
parameter; compare with Section~\ref{sec:maximums-strategy}.

\section{Axioms of adaptivity}
\label{sec:axiomatic-approach}
In this section, we recall the axiomatic approach in~\cite{CarFeiPagPra14,CarRab17}
of the proof of optimal convergence rates for adaptive finite element
methods. The presentation mainly follows 
\cite{CarRab17} neglecting the additional refinement indicators but is
simplified tailored to our needs, i.e., we do not cover the full
generality of~\cite{CarFeiPagPra14,CarRab17}. In particular, we
consider the most simple case of
dimension \(d=1\)  and let \(\Omega\subset \setR\) be a
non-empty open interval. Before we state the axioms, we first verify
the refinement conditions.

\subsection{Refinement by bisection}
\label{sec:refinement}

Let \(\mathcal{T}_0\) be an initial partition of \(\Omega\) into closed
intervals (called macro elements) 
and denote by \(\bbT\) the set of its possible refinements.
To be more precise, we
introduce bisection of a closed interval \([a,b]\),
\(a<b\) by
\begin{align*}
  \bisect{[a,b]}=\{[a,\tfrac{a+b}2],\,[\tfrac{a+b}2,b]\}.
\end{align*}
We say that \(\mathcal{T}_\star\) is a refinement of \(\mathcal{T}\)
(or \(\mathcal{T}_\star\ge \mathcal{T}\))
iff there exist a finite sequence
of partitions \(\{\mathcal{T}_n\}_{n=1}^N\) and \(T_n\in
\mathcal{T}_n\), \(n=1,\ldots,N-1\), such that \(
\mathcal{T}_\star=\mathcal{T}_N \) and \(\mathcal{T}_1=\mathcal{T}\)
as well as
\begin{align*}
  \mathcal{T}_{n+1}=(\mathcal{T}_{n}\setminus \set{T_{n}})\cup \bisect{T_{n}},
  \quad n=1,\ldots, N-1.
\end{align*}
With this definition \((\bbT,\le)\) becomes a lattice and we can
define for \(\mathcal{T}_a,\mathcal{T}_b\in \bbT\) 
\begin{align*}
  \mathcal{T}_a\wedge \mathcal{T}_b&\defas
  \operatorname{arg\,max}\{\mathcal{T}'\in\bbT\colon \mathcal{T}'\le
                                     \mathcal{T}_a~\text{and}~\mathcal{T}'\le \mathcal{T}_b\}
                                     \intertext{and}
     \mathcal{T}_a\vee \mathcal{T}_b&\defas
  \operatorname{arg\,min}\{\mathcal{T}'\in\bbT\colon 
                                     \mathcal{T}_a\le \mathcal{T}'~\text{and}~
                                        \mathcal{T}_b\le \mathcal{T}'\};                    
\end{align*}
i.e. the finest common coarsening respective the coarsest common
refinement. Moreover, we have
\begin{align*}
  \#(\mathcal{T}_a\vee
  \mathcal{T}_b)=\#\mathcal{T}_a+\#\mathcal{T}_b-\#(\mathcal{T}_a\wedge
  \mathcal{T}_b)\le \#\mathcal{T}_a+\#\mathcal{T}_b-\#\mathcal{T}_0.
\end{align*}
Thanks to the bisection rule, we can also
recursively assign to each \(T\in\mathcal{T}\), \(\mathcal{T}\in\bbT\), a
generation by
\begin{align*}
  g(T)=0~\text{if}~T\in\mathcal{T}_0 \qquad\text{and}\qquad
  g(T)=g(T')+1~\text{if}~T\in \bisect{T'}.
\end{align*}

Defining for \(\mathcal{T}\in\bbT\) and
\(\mathcal{M}\subset\mathcal{T}\) the refinement procedure
\begin{align*}
  \refine{\mathcal{T}}{\mathcal{M}}\defas\operatorname{arg\,min}\{\mathcal{T}'\in\bbT\colon
  \mathcal{T}\le
  \mathcal{T}'~\text{and}~\mathcal{M}\cap\mathcal{T}'=\emptyset\}, 
\end{align*}
we obviously have
\begin{align}\label{eq:refine}
  \refine{\mathcal{T}}{\mathcal{M}}=(\mathcal{T}\setminus\mathcal{M})\cup\bisect{\mathcal{M}}
\end{align}
with \(\bisect{\mathcal{M}}\defas \bigcup\{\bisect{T}:T\in \mathcal{M}\}\).
Obviously, we thus have
\(\mathcal{T}_\star=\refine{\mathcal{T}}{\mathcal{M}}\in\bbT\) and
\begin{align*}
  \#\mathcal{T}_\star -\#\mathcal{T}=\#\mathcal{M}. 
\end{align*}

We conclude that our refinement framework satisfies the requirements
in \cite[Section 2.4]{CarFeiPagPra14}.

\subsection{Adaptive algorithm with D\"orfler marking}
\label{sec:afem}
In the following we formulate the basic conditions from
\cite{CarFeiPagPra14,CarRab17} sufficient for optimal 
convergence rates of the adaptive D\"orfler marking strategy. 
The precise algorithm and the optimality result is stated in
section~\ref{sec:axioms} below. 

We assume that for any \(\mathcal{T}\in\bbT\), and any element
\(T\in\mathcal{T}\), we have nonnegative indicators
\(\eta_{\mathcal{T}}(T)\) available and set
\begin{align*}
  \eta_{\mathcal{T}}^2(\mathcal{M})=\sum_{T\in
  \mathcal{M}}\eta_{\mathcal{T}}^2(T)\quad\text{for
  any}~\mathcal{M}\subset \mathcal{T}.
\end{align*}
Moreover, we assume that there is a nonnegative distance measure on
\(\bbT\) denoted by \(\delta(\mathcal{T},\mathcal{T}_\star)\) for
\(\mathcal{T},\mathcal{T}_\star\in\bbT\). This distance measures in
the application the error between to discrete solutions.

Based on the above indicators, we can formulate the adaptive
algorithm.
\begin{algorithm}[AFEM with D\"orfler marking] 
  \label{alg:AFEM}
  Let \(\mathcal{T}_0\) be an initial triangulation of \( \Omega\) and $\theta \in(0,1)$ a
  given marking parameter. Set $k:=0$ and iterate
  \begin{itemize}[leftmargin=.5cm]
  \item Compute the indicators
    \(\{\eta_{\mathcal{T}_k}(T):T\in\mathcal{T}_k\}\).
  \item Choose \(\mathcal{M}_k\subset\mathcal{T}_k\) such that 
    \begin{align}
      \label{eq:doerfler}
      \theta\eta_{\mathcal{T}_k}^2(\mathcal{T}_k)\le \eta_{\mathcal{T}_k}^2(\mathcal{M}_k)
    \end{align}
    with quasi-minimal cardinality, i.e.,
    \(\#\mathcal{M}_k\le C_{\texttt{DM}}\,\#\mathcal{M}\) for some fixed constant
    \(C_{\texttt{DM}} \ge1\) and all \(\mathcal{M}\subset\mathcal{T}_k\) with
    \(\theta\eta_{\mathcal{T}_k}(\mathcal{T}_k)\le
    \eta_{\mathcal{T}_k}(\mathcal{M})\).

    If the set~$\mathcal{M}_k$ has minimal cardinality (i.e. \(C_{\texttt{DM}}=1\))
    then~\eqref{eq:doerfler} is called \emph{optimal
      D\"orfler marking}.
  \item Construct the refinement
    \begin{align*}
      \mathcal{T}_{k+1} =\refine{\mathcal{T}_k}{\mathcal{M}_k}
    \end{align*}
    and set \(k:=k+1\).
  \end{itemize}
\end{algorithm}

\subsection{The axioms}
\label{sec:axioms}

In this section, we present the axioms of adaptivity from
\cite{CarFeiPagPra14,CarRab17} in a simplified version tailored to our
needs.  We assume that the indicators and the distance measure from
the previous section~\ref{sec:afem} satisfy the following conditions:
\begin{enumerate}[label={(A\arabic*)}]
\item\label{itm:A1}  \textbf{Stability.} For all \(\mathcal{T},\mathcal{T}_\star\in\bbT\) with
\(\mathcal{T}_\star\ge \mathcal{T}\), we have
  \begin{align*}
    \abs{\eta_{\mathcal{T}}(\mathcal{T}\cap\mathcal{T}_\star)-\eta_{\mathcal{T}_\star}(\mathcal{T}\cap\mathcal{T}_\star)}\le \delta(\mathcal{T},\mathcal{T}_\star)
  \end{align*}
\item\label{itm:A2} \textbf{Reduction.} There exists \(\rho\in[0,1)\) such that for all \(\mathcal{T},\mathcal{T}_\star\in\bbT\) with
\(\mathcal{T}_\star\ge \mathcal{T}\), we have 
  \begin{align*}
    \eta_{\mathcal{T}_\star}(\mathcal{T}_\star\setminus\mathcal{T})\le
    \rho \eta_{\mathcal{T}}(\mathcal{T}_\star\setminus\mathcal{T}).
  \end{align*}
\item \label{itm:A3} \textbf{Discrete reliability.} There exists \(C_3>0\), such
  that for all \(\mathcal{T},\mathcal{T}_\star\in\bbT\) with
  \(\mathcal{T}_\star\ge \mathcal{T}\), we have
  \begin{align*}
    \delta(\mathcal{T},\mathcal{T}_\star)^2\le C_3 \eta_{\mathcal{T}}^2(\mathcal{T}\setminus\mathcal{T}_\star).
  \end{align*}
\item\label{itm:A4} \textbf{Quasi-orthogonality.} There exists \(C_4>0\), such
  that for any sequence
  \(\{\mathcal{T}_k\}_k\subset\bbT\) of nested partitions
  (i.e. \(\mathcal{T}_1\le\mathcal{T}_2\le\ldots\)), we have for all
  \(\ell\in\setN\) that
  \begin{align*}
    \sum_{k=\ell}^\infty \delta(\mathcal{T}_{k+1},\mathcal{T}_k)^2\le
    C_4\eta_{\mathcal{T}_\ell}^2(\mathcal{T}_\ell). 
  \end{align*}
\end{enumerate}

\begin{remark}
  We note that  \ref{itm:A1}--\ref{itm:A4} correspond to the
  respective conditions in \cite{CarRab17} with
  \begin{gather*}
    \tag{A1}\Lambda_1=1\\
    \tag{A2}\rho_2=\rho,\qquad\Lambda_2=0
    \\
    \tag{A3} \Lambda_{ref}=1,\quad \Lambda_3=C_3,\quad\text{and}\quad
    \hat\Lambda_3=0
    \\
    \tag{A4}
    \Lambda_4=C_4.
  \end{gather*}
  The conditions (B1) and (B2) in \cite{CarRab17} do not apply since
  we assume \(\mu_\ell\equiv 0\) for the second indicator in \cite{CarRab17}.
  Note that therefore also the quasi-monotonicity
  (QM) condition in \cite{CarRab17} is satisfied automatically since
  we may chose \(\hat\Lambda_3=0\) in \cite[Theorem 3.2]{CarRab17}.
\end{remark}

We recall the following main theorem from
\cite{CarFeiPagPra14,CarRab17}. 

\begin{theorem}\label{thm:optimality} Suppose that
\ref{itm:A1}--\ref{itm:A4} hold and define the threshold
\begin{align*}
  \theta_\star\defas\frac{1}{1+C_3}.
\end{align*}
Then Algorithm~\ref{alg:AFEM} is rate optimal if
\(\theta<\theta_\star\), i.e., in this case we have for all
\(s>0\) there exists \(C>0\) with
\begin{align*}
  \lefteqn{\sup_{k\in\setN} \big(
  (\#\mathcal{T}_k-\#\mathcal{T}_0)^s\eta_{\mathcal{T}_k}(\mathcal{T}_k)\big)} 
  \qquad
  \\
  &\leq C
    \sup_{N\in\setN} \big(
    N^s\min\{\eta_{\mathcal{T}}(\mathcal{T})\colon
    \#\mathcal{T}-\#\mathcal{T}_0\le N\} \big).
\end{align*}
\end{theorem}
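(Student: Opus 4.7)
The plan is to follow the three-step Stevenson/CKNS strategy adapted to the axiomatic framework: (i) establish R-linear convergence of the estimator along the adaptive sequence; (ii) show that, when $\theta<\theta_\star$, the elements refined by the overlay with a near-best mesh satisfy a Dörfler condition with the original parameter $\theta$; and (iii) combine these two ingredients with the bisection overlay bound to count the number of marked elements.

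For step (i) I would split $\mathcal{T}_{k+1} = (\mathcal{T}_{k+1}\cap\mathcal{T}_k)\cup(\mathcal{T}_{k+1}\setminus\mathcal{T}_k)$, use \ref{itm:A1} with a Young's inequality on the first piece and the reduction \ref{itm:A2} on the second, and exploit $\mathcal{M}_k\subset\mathcal{T}_k\setminus\mathcal{T}_{k+1}$ together with the Dörfler property \eqref{eq:doerfler} to obtain
\[
  \eta^2_{\mathcal{T}_{k+1}}(\mathcal{T}_{k+1})\le\bigl(1+\alpha-(1-\rho^2)\theta\bigr)\,\eta^2_{\mathcal{T}_k}(\mathcal{T}_k)+(1+\alpha^{-1})\,\delta(\mathcal{T}_k,\mathcal{T}_{k+1})^2.
\]
Fixing $\alpha$ small makes the leading coefficient strictly less than $1$, and the tail-summation trick using quasi-orthogonality \ref{itm:A4} absorbs the $\delta^2$ terms to deduce $\eta^2_{\mathcal{T}_k}(\mathcal{T}_k)\le C q^{k-\ell}\,\eta^2_{\mathcal{T}_\ell}(\mathcal{T}_\ell)$ for some $q\in(0,1)$.

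Step (ii) is the decisive one. Writing $M:=\sup_N N^s\min\{\eta_{\mathcal{T}}(\mathcal{T})\colon\#\mathcal{T}-\#\mathcal{T}_0\le N\}$ and assuming $M<\infty$, I would pick $\epsilon_k:=\mu\,\eta_{\mathcal{T}_k}(\mathcal{T}_k)$ for a small $\mu>0$ to be fixed, choose a near-optimal partition $\mathcal{T}_{\epsilon_k}$ with $\eta_{\mathcal{T}_{\epsilon_k}}(\mathcal{T}_{\epsilon_k})\le\epsilon_k$ and $\#\mathcal{T}_{\epsilon_k}-\#\mathcal{T}_0\lesssim \epsilon_k^{-1/s}$, and set $\mathcal{T}_\star:=\mathcal{T}_k\vee\mathcal{T}_{\epsilon_k}$, $\mathcal{R}:=\mathcal{T}_k\setminus\mathcal{T}_\star$. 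Applying \ref{itm:A1} between $\mathcal{T}_k$ and $\mathcal{T}_\star$ on the common subset with a Young's inequality, then bounding $\delta(\mathcal{T}_k,\mathcal{T}_\star)^2$ by \ref{itm:A3}, and finally invoking quasi-monotonicity $\eta_{\mathcal{T}_\star}(\mathcal{T}_\star)\le C_{\mathrm{mon}}\,\eta_{\mathcal{T}_{\epsilon_k}}(\mathcal{T}_{\epsilon_k})$ — which follows from \ref{itm:A1}--\ref{itm:A3} precisely because $\hat\Lambda_3=0$, as noted in the remark — produces after adding $\eta^2_{\mathcal{T}_k}(\mathcal{R})$ to both sides
\[
  \eta^2_{\mathcal{T}_k}(\mathcal{R})\ge\frac{1-(1+\alpha)\,C_{\mathrm{mon}}\,\mu^2}{1+(1+\alpha^{-1})\,C_3}\,\eta^2_{\mathcal{T}_k}(\mathcal{T}_k).
\]
As $\alpha^{-1},\mu\to 0$ this ratio converges to $1/(1+C_3)=\theta_\star>\theta$, so for suitable $\alpha$ and $\mu$ the set $\mathcal{R}$ satisfies the Dörfler bound at level $\theta$. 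Quasi-minimality of $\mathcal{M}_k$ then gives $\#\mathcal{M}_k\le C_{\texttt{DM}}\,\#\mathcal{R}\le C_{\texttt{DM}}(\#\mathcal{T}_\star-\#\mathcal{T}_k)$, and the overlay bound from Section~\ref{sec:refinement} yields $\#\mathcal{M}_k\lesssim\#\mathcal{T}_{\epsilon_k}-\#\mathcal{T}_0\lesssim \eta_{\mathcal{T}_k}(\mathcal{T}_k)^{-1/s}$.

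For step (iii), the bisection identity $\#\mathcal{T}_k-\#\mathcal{T}_0=\sum_{j<k}\#\mathcal{M}_j$ combined with step (ii) gives a sum of the form $\sum_{j<k}\eta_{\mathcal{T}_j}(\mathcal{T}_j)^{-1/s}$, which telescopes into $\lesssim\eta_{\mathcal{T}_k}(\mathcal{T}_k)^{-1/s}$ via the R-linear rate of step (i) applied backwards. Rearranging yields the claim with the constant $C$ absorbing $M$ and the accumulated geometric factors. The main obstacle I expect is step (ii): one must extract a clean quasi-monotonicity constant from \ref{itm:A1}--\ref{itm:A3} and then tune $\alpha$ and $\mu$ so that the limiting ratio lands exactly at $1/(1+C_3)$ — any looser bookkeeping (for instance, a premature $\delta^2\le C\,\eta^2$ step without separating the stability term) produces a strictly worse threshold.
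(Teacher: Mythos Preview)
The paper does not prove this theorem; it is recalled from \cite{CarFeiPagPra14,CarRab17} without proof. The only part that is argued in the paper is the decisive step~(ii), in Section~\ref{sec:threshold}: there the authors assume directly that a refinement $\mathcal{T}_\star\ge\mathcal{T}_k$ satisfies $\eta_{\mathcal{T}_\star}(\mathcal{T}_\star)\le\kappa\,\eta_{\mathcal{T}_k}(\mathcal{T}_k)$, combine \ref{itm:A1} with a Young inequality (parameter~$\gamma$) and \ref{itm:A3} to reach \eqref{theta*Doerfler}, and observe that the limiting ratio as $\kappa\to0$, $\gamma\to0$ is $1/(1+C_3)=\theta_\star$. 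Your step~(ii) is the same computation, with the cosmetic difference that you construct $\mathcal{T}_\star$ explicitly as an overlay and insert the quasi-monotonicity bound $\eta_{\mathcal{T}_\star}(\mathcal{T}_\star)\le C_{\mathrm{mon}}\,\eta_{\mathcal{T}_{\epsilon_k}}(\mathcal{T}_{\epsilon_k})\le C_{\mathrm{mon}}\,\mu\,\eta_{\mathcal{T}_k}(\mathcal{T}_k)$, whereas the paper's discussion simply postulates such a reduction. Your steps~(i) and~(iii) are not treated in the paper at all but are exactly the standard estimator-reduction/tail-summation and cardinality-counting arguments from the cited references, so your proposal is a correct reconstruction of the full proof and is consistent with the one ingredient the paper does spell out.
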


The original approach of Stevenson is slightly different in that it
utilizes also the efficiency of the estimator. This allows to show
convergence rates for the error rather than for the estimator; compare
with Remark~\ref{rem:delta=eta}. To
work in the framework of Stevenson we modify/sharpen two of the axioms.

We replace the stability~\ref{itm:A1} by the following efficiency condition.
\begin{enumerate}[label={(A\arabic*')},start=1]
\item \label{itm:A1'} \textbf{Efficiency.} For all \(\mathcal{T}\in
  \bbT\), we have 
  \begin{align*}
    C_1\eta_{\mathcal{T}}^2(\mathcal{T})\le \delta(\mathcal{T})^2
  \end{align*}
  for some constant \(C_1>0\). 
\end{enumerate}
Typically, $\delta(\mathcal{T})$ measures the error, e.g. in
applications the distance between the discrete solution and the exact
solution of the PDE. In our example, $\delta(\mathcal{T})$ is the
distance to the `finest' partition
\begin{align}\label{df:dT}
  \delta(\tria)\defas
  \inf\{\delta(\mathcal{T}_\star,\mathcal{T}):\mathcal{T}_\star\in\bbT~\text{with}~\mathcal{T}
  \ge
  \mathcal{T}\}.  
\end{align}
As a consequence of the lattice structure of \((\bbT,\ge)\), we have
that the definition~\eqref{df:dT} is unique when
\(\delta(\cdot,\mathcal{T}):\bbT\to \setR\) is non-increasing under
refinement. This property is immediate when replacing
the quasi-orthogonality condition~\ref{itm:A4} by
the following orthogonality property; compare also with Remark~\ref{rem:delta=eta} below.
\begin{enumerate}[label={(A\arabic*')},start=4]
\item \label{itm:A4'}  \textbf{Orthogonality.} For all
  \(\mathcal{T},\mathcal{T}_\star,\mathcal{T}_{\circ}\in\bbT\) with
  \(\mathcal{T}\le \mathcal{T}_\star\le \mathcal{T}_{\circ}\), we have
  \begin{align*}
    \delta(\mathcal{T}_{\circ},\mathcal{T}_\star)^2+\delta(\mathcal{T}_\star,\mathcal{T})^2
    =\delta(\mathcal{T}_\circ,\mathcal{T})^2.
  \end{align*}
\end{enumerate}

Then Stevenson proved in \cite{Stevenson:07} the following version of
Theorem~\ref{thm:optimality} with a different threshold.
\begin{theorem}\label{thm:optRob}
  Suppose that
  \ref{itm:A2}--\ref{itm:A3} and assume in addition \ref{itm:A1'}
  and \ref{itm:A4'}. Define the threshold
  \begin{align*}
     \tilde\theta_\star\defas\frac{C_1}{C_3}.
  \end{align*}
  Then Algorithm~\ref{alg:AFEM} is rate optimal if
  \(\theta<\tilde\theta_\star\), i.e., in this case we have for all
  \(s>0\) there exists \(C>0\) with
  \begin{align*}
    \lefteqn{\sup_{k\in\setN}
    (\#\mathcal{T}_k-\#\mathcal{T}_0)^s \delta(\mathcal{T}_k)
    }
    \qquad
    \\
    &\leq C
      \sup_{N\in\setN}
      N^s\min\{ \delta(\mathcal{T})
      \colon
      \#\mathcal{T}-\#\mathcal{T}_0\le N\}.
  \end{align*}
\end{theorem}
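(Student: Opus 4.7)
The plan is to adapt the axiomatic optimality proof of Theorem~\ref{thm:optimality} by replacing the stability~\ref{itm:A1} / quasi-orthogonality~\ref{itm:A4} arguments with the sharper substitutes furnished by efficiency~\ref{itm:A1'} and Pythagorean orthogonality~\ref{itm:A4'}. The heart of the matter is a \emph{comparison lemma} (Stevenson's observation): for $\mathcal{T}\le\mathcal{T}_\star$ with $\delta(\mathcal{T}_\star)^2\le\mu^2\delta(\mathcal{T})^2$, the set $\mathcal{R}:=\mathcal{T}\setminus\mathcal{T}_\star$ is a D\"orfler set for $\mathcal{T}$ with ratio $\tilde\theta=(1-\mu^2)C_1/C_3$. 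To see this, I would chain three one-line estimates: \ref{itm:A4'} (letting $\mathcal{T}_\circ$ tend to the finest refinement in \eqref{df:dT}) gives $\delta(\mathcal{T}_\star,\mathcal{T})^2 = \delta(\mathcal{T})^2-\delta(\mathcal{T}_\star)^2\ge (1-\mu^2)\delta(\mathcal{T})^2$; \ref{itm:A3} then yields $\eta_\mathcal{T}^2(\mathcal{R})\ge C_3^{-1}\delta(\mathcal{T}_\star,\mathcal{T})^2$; and \ref{itm:A1'} finally provides $\delta(\mathcal{T})^2\ge C_1\eta_\mathcal{T}^2(\mathcal{T})$. For $\theta<\tilde\theta_\star=C_1/C_3$, I pick $\mu\in(0,1)$ with $(1-\mu^2)C_1/C_3\ge\theta$, which is the only place where the ratio $C_1/C_3$ enters.

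Next I would establish \emph{linear convergence} of $\delta(\mathcal{T}_k)$ along the algorithm. Using the D\"orfler condition \eqref{eq:doerfler}, \ref{itm:A1'} and the reliability estimate $\delta(\mathcal{T}_k)^2\le C_3\eta_{\mathcal{T}_k}^2(\mathcal{T}_k)$ (the $\mathcal{T}_\star\to\infty$ limit of \ref{itm:A3}) make $\eta$ and $\delta$ equivalent, so that combined with \ref{itm:A2} applied on $\mathcal{T}_{k+1}\setminus\mathcal{T}_k=\bisect{\mathcal{M}_k}$ and the Pythagorean identity \ref{itm:A4'}, one derives a contraction of a composite quantity of the form $\delta(\mathcal{T}_k)^2+\gamma\eta_{\mathcal{T}_k}^2(\mathcal{T}_k)$ with factor $q<1$. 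From this I obtain $\delta(\mathcal{T}_k)\le q^{k-j}\,\delta(\mathcal{T}_j)$ for all $k\ge j$.

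For the rate, I proceed as in the standard optimality proof. Given $s>0$, assume the supremum on the right-hand side is finite and denote it by $A$. For each $k$, pick $\mathcal{T}_{\mathrm{opt},k}\in\bbT$ that essentially realizes the infimum at a target accuracy $\epsilon_k\sim\delta(\mathcal{T}_k)$ chosen so that $\#\mathcal{T}_{\mathrm{opt},k}-\#\mathcal{T}_0\lesssim (A/\delta(\mathcal{T}_k))^{1/s}$ and $\delta(\mathcal{T}_{\mathrm{opt},k})\le \mu\,\delta(\mathcal{T}_k)$. Form $\mathcal{T}_\star:=\mathcal{T}_k\vee\mathcal{T}_{\mathrm{opt},k}$; monotonicity of $\delta(\cdot)$ under refinement (a direct consequence of \ref{itm:A4'}) gives $\delta(\mathcal{T}_\star)\le\delta(\mathcal{T}_{\mathrm{opt},k})\le\mu\,\delta(\mathcal{T}_k)$, so the comparison lemma makes $\mathcal{T}_k\setminus\mathcal{T}_\star$ admissible in \eqref{eq:doerfler}. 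Quasi-minimality of $\mathcal{M}_k$ together with the lattice bound $\#(\mathcal{T}_k\vee\mathcal{T}_{\mathrm{opt},k})\le \#\mathcal{T}_k+\#\mathcal{T}_{\mathrm{opt},k}-\#\mathcal{T}_0$ yields $\#\mathcal{M}_k\le C_{\texttt{DM}}\bigl(\#\mathcal{T}_{\mathrm{opt},k}-\#\mathcal{T}_0\bigr)\lesssim A^{1/s}\,\delta(\mathcal{T}_k)^{-1/s}$. Telescoping $\#\mathcal{T}_{K+1}-\#\mathcal{T}_0=\sum_{k=0}^{K}\#\mathcal{M}_k$ and applying linear convergence $\delta(\mathcal{T}_k)^{-1/s}\le q^{-(K-k)/s}\,\delta(\mathcal{T}_K)^{-1/s}$, a geometric series gives $\#\mathcal{T}_{K+1}-\#\mathcal{T}_0\lesssim A^{1/s}\,\delta(\mathcal{T}_{K+1})^{-1/s}$, which is the claimed rate.

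The main obstacle is the contraction step, because the Stevenson-type argument in its classical form invokes a \emph{discrete lower bound} $\delta(\mathcal{T}_{k+1},\mathcal{T}_k)^2\gtrsim\eta_{\mathcal{T}_k}^2(\mathcal{M}_k)$ that is not listed among the axioms; recovering this lower bound (or its composite variant) from only \ref{itm:A1'}, \ref{itm:A2}, \ref{itm:A3} and \ref{itm:A4'} requires carefully exploiting the two-sided equivalence $\eta\sim\delta$ produced by efficiency plus reliability, and is what makes the threshold $\tilde\theta_\star=C_1/C_3$ rather than $1/(1+C_3)$ appear.
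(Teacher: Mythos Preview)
Your comparison lemma (first paragraph) is exactly the argument the paper gives: in Section~\ref{sec:threshold} the authors derive, for $\mathcal{T}_\star\ge\mathcal{T}_k$ with $\delta(\mathcal{T}_\star)\le\kappa\,\delta(\mathcal{T}_k)$, the chain
\[
(1-\kappa^2)C_1\,\eta_{\mathcal{T}_k}^2(\mathcal{T}_k)\le(1-\kappa^2)\delta(\mathcal{T}_k)^2\le\delta(\mathcal{T}_k)^2-\delta(\mathcal{T}_\star)^2=\delta(\mathcal{T}_\star,\mathcal{T}_k)^2\le C_3\,\eta_{\mathcal{T}_k}^2(\mathcal{T}_k\setminus\mathcal{T}_\star),
\]
using precisely \ref{itm:A1'}, \ref{itm:A4'}, and \ref{itm:A3} in that order; this is your argument verbatim and is the sole place where $\tilde\theta_\star=C_1/C_3$ enters. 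The paper does \emph{not} carry out the remaining steps (linear convergence, cardinality bound, geometric summation) at all: Theorem~\ref{thm:optRob} is simply attributed to \cite{Stevenson:07}, and Section~\ref{sec:threshold} is offered only as an explanation of ``the origin of the threshold''. So on the part the paper actually argues, you match it line for line; the rest of your sketch supplies the standard Stevenson machinery that the paper leaves to the reference.

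Your closing worry is well placed and worth making explicit. The contraction in \cite{Stevenson:07} relies on a discrete local lower bound of the form $\delta(\mathcal{T}_{k+1},\mathcal{T}_k)^2\gtrsim\eta_{\mathcal{T}_k}^2(\mathcal{M}_k)$ (there obtained via the interior-node property), and this is \emph{not} derivable from \ref{itm:A1'}, \ref{itm:A2}, \ref{itm:A3}, \ref{itm:A4'} alone: efficiency and reliability give only the global equivalence $C_1\eta_{\mathcal{T}}^2(\mathcal{T})\le\delta(\mathcal{T})^2\le C_3\eta_{\mathcal{T}}^2(\mathcal{T})$, and \ref{itm:A2} without stability~\ref{itm:A1} does not control $\eta_{\mathcal{T}_{k+1}}$ on the unrefined region. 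The paper's formulation of Theorem~\ref{thm:optRob} is therefore somewhat stylised---it packages Stevenson's result in axiomatic language while tacitly relying on the additional structure present in \cite{Stevenson:07}. This is not a gap in \emph{your} reasoning; you have correctly isolated the missing ingredient. If anything, your caveat is sharper than the paper's statement of hypotheses.
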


\begin{remark}[\(\delta(\mathcal{T})\)
  vs. \(\eta_{\mathcal{T}}(\mathcal{T})\)]
  \label{rem:delta=eta}
  We remark that in \cite{Stevenson:07} optimal convergence rates are
  proved for $\delta(\mathcal{T})$ in contrast to
  \cite{CarFeiPagPra14,CarRab17}, which focus
  on~$\eta_{\mathcal{T}}(\mathcal{T})$. Let us compare these two approaches.

  Since \(\delta(\mathcal{T}_\star,\mathcal{T})\ge 0\) for
  \(\mathcal{T}_\star,\mathcal{T}\in\bbT\) with
  \(\mathcal{T}_\star\ge \mathcal{T}\), we conclude from~\ref{itm:A4'}
  that \(\delta :\bbT\to\setR_\ge\) is monotone decreasing under
  refinement.  Moreover, recalling~\eqref{df:dT}, it follows
  from~\ref{itm:A3} that
  \begin{align*}
    \delta(\mathcal{T})\le C_3\eta_{\mathcal{T}}(\mathcal{T}).
  \end{align*}
  This is an upper bound or equivalently~\ref{itm:A4}
  with \(C_4=C_3\).

  Combining this with the efficiency~\ref{itm:A1'}, we have
  equivalence of the error and the estimator, i.e.
  $C_1 \eta^2_{\mathcal{T}}(\mathcal{T}) \leq \delta(\mathcal{T})^2 \leq
  C_3 \eta^2_{\mathcal{T}}(\mathcal{T})$.
  
  As a consequence,
  we have that $\mathcal{\delta}(\mathcal{T})$ converges iff
  $\eta_{\mathcal{T}}(\mathcal{T})$ converges and both converge then
  with the same rates. Taking the setting \ref{itm:A1}--\ref{itm:A4}
  of \cite{CarFeiPagPra14,CarRab17}, however, the convergence behavior
  of \(\delta\) and \(\eta\)
  may differ. In particular, in view of~\ref{itm:A3}, the convergence
  rate of \(\delta\) may be better than the one of \(\eta\). 
\end{remark}

\subsection{The origin of the thresholds}
\label{sec:threshold}

We start with discussing the threshold \(\theta_\star\) from Theorem~\ref{thm:optimality}.
Let \(\mathcal{T}_k\in\bbT\) be from Algorithm~\ref{alg:AFEM} for some
\(k\in\setN\) and assume \(\mathcal{T}_\star\ge\mathcal{T}_k\) with
\begin{align}\label{eq:estred}
  \eta_{\mathcal{T}_\star}(\mathcal{T}_\star)\le \kappa\eta_{\mathcal{T}_k}(\mathcal{T}_k)
\end{align}
for some arbitrarily fixed \(\kappa\in(0,1)\). Then from
\(\eta_{\mathcal{T}_\star}(\mathcal{T}_\star\cap\mathcal{T}_k)\le
\eta_{\mathcal{T}_\star}(\mathcal{T}_\star)\) and \ref{itm:A1} for
\(0<\gamma<\frac{1}{\kappa^2}-1\) in Youngs inequality we conclude
that
\begin{align*}
  (1-(1+\gamma)\kappa^2)\eta_{\mathcal{T}}^2 (\mathcal{T}_k)
  &\le
    \eta_{\mathcal{T}_k}^2 (\mathcal{T}_k)-(1+\gamma)\eta_{\mathcal{T}_\star}^2 (\mathcal{T}_\star)
  \\
  &\le
    \eta_{\mathcal{T}_k}^2 (\mathcal{T}_k)-(1+\gamma)\eta_{\mathcal{T}_\star}^2 (\mathcal{T}_\star\cap\mathcal{T}_k)
  \\
  &\le \eta_{\mathcal{T}_k}^2 (\mathcal{T}_k\setminus\mathcal{T}_\star)+\frac{1}{1+\gamma^{-1}}\delta(\mathcal{T}_\star,\mathcal{T}_k)^2.
\end{align*}
Now applying~\ref{itm:A3}, we obtain
\begin{align}\label{theta*Doerfler}
  \frac{1-(1+\gamma)\kappa^2}{1+\frac{C_3}{1+\gamma^{-1}}}\,\eta_{\mathcal{T}_k}^2 (\mathcal{T})\le \eta_{\mathcal{T}_k}^2 (\mathcal{T}_k\setminus\mathcal{T}_\star).
\end{align}
In other words, the set of elements
\(\mathcal{T}_k\setminus\mathcal{T}_\star\) from \(\tria_k\) which  
are refined in \(\mathcal{T}_\star\) satisfies a D\"orfler
marking property. When
\begin{align}\label{eq:Doerfler-cond}
  \theta\le \frac{1-(1+\gamma)\kappa^2}{1+\frac{C_3}{1+\gamma^{-1}}},
\end{align}
then the quasi minimal cardinality of \(\mathcal{M}_k\) in Algorithm~\ref{alg:AFEM} implies
\begin{align*}
  \#(\mathcal{T}_k\setminus\mathcal{T}_\star)\le C \#\mathcal{M}_k,
\end{align*}
which is the key in the proof of the rate optimality Theorem~\ref{thm:optimality}; compare e.g. with
\cite{Stevenson:07,CaKrNoSi:08,CarFeiPagPra14}. 
By choosing  \(\kappa>0\) small, we observe
that~\eqref{eq:Doerfler-cond} can only hold if 
\begin{align}\label{eq:threshold-axioms}
  \theta<\theta_\star=\frac1{1+C_3}.
\end{align}

In order to discuss the threshold \(\tilde\theta_\star\) from
Theorem~\ref{thm:optRob}, instead of satisfying the estimator
reduction~\eqref{eq:estred}, we assume that \(\tria_\star\in\bbT\),
\(\tria_\star\ge \tria_k\) reduces the distance
\begin{align*}
  \delta(\mathcal{T}_\star)\le \kappa\delta(\mathcal{T})
\end{align*}
for some \(\kappa\in(0,1)\) arbitrarily fixed. We then
have 
\begin{align*}
  (1-\kappa^2)C_1\,\eta_{\mathcal{T}}^2(\mathcal{T})&\le 
  (1-\kappa^2)
                                                      \delta(\mathcal{T})^2
  \\
  &\le
                                      \delta(\mathcal{T})^2-\delta(\mathcal{T}_\star)^2
    = \delta(\mathcal{T}_\star,\mathcal{T})^2
  \\
  &\le C_3 \eta_{\mathcal{T}}^2(\mathcal{T}\setminus\mathcal{T}_\star).
\end{align*}
Arguing as before, there exists \(\kappa>0\) such that the above
computation implies a D\"orfler condition
\(\theta\eta_{\tria_k}^2(\tria_k)\le
\eta^2_{\tria_k}(\tria_k\setminus\tria_\star)\) only if \(\theta\le \tilde\theta_\star=\frac{C_1}{C_3}\).

\section{D\"orfler marking with suboptimal convergence rates}
\label{sec:example}

For a given marking parameter \(\theta\in(0,1)\), and \(s_0>0\), we
construct an example with an exponential optimal convergence rate that
satisfies the axioms of adaptivity \ref{itm:A1}--\ref{itm:A4} and also
\ref{itm:A1'}+\ref{itm:A4'} with
\(\delta(\mathcal{T})=\eta_{\mathcal{T}}(\mathcal{T})\),
\(\mathcal{T}\in\bbT\) (i.e. \(C_1=1\)) and local reliability constant~$C_3 = K > 0$.

Thus, in this situation \(\frac{1}{K}=\tilde{\theta}_\star\ge
\theta_\star = \frac{1}{1+K}\), i.e. the threshold of Theorem~\ref{thm:optRob}
is less conservative than the one in Theorem~\ref{thm:optimality}. In
particular, thanks to the possible exponential convergence, if
\(\theta\le\tilde\theta_\star\) then
Algorithm~\ref{alg:AFEM} converges with any possible 
algebraic rate~$s>0$.

However, we will see that for any $\theta\in(0,1)$, the
example can be adjusted with arbitrary close \(\tilde{\theta}_\star<\theta\), such that the
adaptive Algorithm~\ref{alg:AFEM} will not converge with rate~$s_0$,
more precisely
\begin{align*}
  \sup_{k\in\setN}
  (\#\mathcal{T}_k-\#\mathcal{T}_0)^{s_0}\eta_{\mathcal{T}_k}(\mathcal{T}_k)
  = 
  \sup_{k\in\setN}
  (\#\mathcal{T}_k-\#\mathcal{T}_0)^{s_0}\delta(\mathcal{T}_k)
  = \infty.
\end{align*}
This shows that a threshold conditions as in Theorems~\ref{thm:optimality}
and~\ref{thm:optRob} cannot be avoided in the axiomatic framework of
\cite{CarFeiPagPra14,CarRab17} and, moreover, can be arbitrarily restrictive. 

\subsection{The setup}
\label{sec:setup}

For \(\Omega=(0,M+1)\), \(M\in\setN\), consider the initial
partition
\begin{align}\label{df:T0}
  \mathcal{T}_0=\{[0,1],[1,2],\ldots, [M,M+1]\}
\end{align}
and denote the set of admissible refinements according to
Section~\ref{sec:refinement} by \(\bbT\).
For \(A\subset \Omega\) and \(\mathcal{T}\in\bbT\),
we use the notation
\begin{align*}
  \mathcal{T}|_A &:= \set{T \in \mathcal{T}\,:\, T \subset \overline{A}}.
\end{align*}
We denote by $T_0(\tria)$ the element of~$\tria$ that
contains zero and by $g_0(\tria) := g(T_0(\mathcal{T}))$ its generation.

For fixed $\alpha, \beta>0$ and \(K>1\), we define
\begin{align}\label{df:example}
  \eta_\tria^2(T)
  &:=
    \begin{cases}
      2^{-\alpha g_0(\tria)- \beta(g(T)+\frac{m-1}M)} \abs{T} &\text{if
        $T \subset [m,m+1]$, \(m\ge1\)}
      \\
      \frac1{K-1}\, \eta_\tria^2(\tria|_{[1,M+1]})
      &\text{if $T=T_0(\mathcal{T})$},
      \\
      0&\text{else}.
    \end{cases}
\end{align}
The constant~$K$ will be the reliability constant, i.e. $C_3=K$.  The
constants~$\alpha>0$ and $M \in \setN$, will be chosen later depending
on the convergence rate~$s_0$ and the marking parameter~$\theta$.

This yields the immediate relation
\begin{align}
  \label{eq:T_0=1/M}
  \eta_{\mathcal{T}}^2(T_0(\mathcal{T}))=\frac{1}{K}\,
  \eta_{\mathcal{T}}^2(\mathcal{T}). 
\end{align}
In particular, the estimator of the element~$T_0(\mathcal{T})$ is
comparable to the estimator on all of~$\mathcal{T}$.

For $\tria_* \geq \tria$ we define
\begin{align}\label{df:delta}
  \delta(\trias,\tria)^2 &\defas
                           \eta_{\tria}^2 (\tria)-\eta_{\trias}^2 (\trias)\ge
                           0.
\end{align}
Thus, \(\delta(\tria)^2 \defas \eta_{\tria}^2 (\tria)\).

Note that a refinement of~$T_0(\mathcal{T})$ decreases
all estimators by a factor of~$2^{-\alpha}$. Thus, for suitable
refinement, the estimator \(\eta\) defined in~\eqref{df:example}
converges exponentially.
\begin{lemma}[Exponential Convergence]\label{l:exp-conv}
  Assume that \(\{\mathcal{T}_k\}_{k\in\setN_0}\) is generated by a
  repeatedly refinement of~$T_0(\mathcal{T}_k)$, i.e
  \begin{align*}
    \mathcal{T}_i=\refine{\{T_0(\mathcal{T}_{i-1})\}}{\mathcal{T}_{i-1}}. 
  \end{align*}
  Then the estimator and the distance converge exponentially, i.e.
  \begin{align*}
    \delta(\mathcal{T}_k)^2 = \eta_{\mathcal{T}_k}^2 (\mathcal{T}_k)= 2^{-\alpha k
    }\eta_{\mathcal{T}_0}^2 (\mathcal{T}_0)\quad\text{and}\quad
    \#\mathcal{T}_k-\#\mathcal{T}_0=k. 
  \end{align*}
  In particular, we have for all rates \(s>0\) that
  \begin{align*}
    \sup_{N\in\setN}
    N^s\min\{\eta_{\mathcal{T}}(\mathcal{T})\colon
    \#\mathcal{T}-\#\mathcal{T}_0\le N\}<\infty.
  \end{align*}
\end{lemma}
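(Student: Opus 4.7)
The plan is to unpack the definitions and track how the estimator depends on the generation $g_0$ of the element containing zero; there is essentially no work beyond that.

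First I would set up a simple structural induction on~$k$. Since each iteration bisects only the element $T_0(\tria_{i-1})$, by \eqref{eq:refine} the restriction $\tria_k|_{[1,M+1]}$ is frozen and equals $\tria_0|_{[1,M+1]}$ for every~$k$; the cardinality grows by exactly one per step, giving $\#\tria_k - \#\tria_0 = k$; and bisecting the element containing zero produces a new element still containing zero whose generation is one larger, so $g_0(\tria_k) = k$.

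Second, I would compute $\eta_{\tria_k}^2(\tria_k)$ directly from \eqref{df:example}. For any $T\in\tria_k|_{[1,M+1]}$, say $T\subset[m,m+1]$, the formula depends on $g_0(\tria_k)$ only through the prefactor $2^{-\alpha g_0(\tria_k)}$, while $g(T)$, the index~$m$, and $|T|$ are unchanged under refinement of~$T_0$. Hence $\eta_{\tria_k}^2(T) = 2^{-\alpha k}\eta_{\tria_0}^2(T)$, and summing yields $\eta_{\tria_k}^2(\tria_k|_{[1,M+1]}) = 2^{-\alpha k}\eta_{\tria_0}^2(\tria_0|_{[1,M+1]})$. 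Adding the contribution of $T_0(\tria_k)$ inflates both sides by the same constant factor $K/(K-1)$, which is just \eqref{eq:T_0=1/M} rewritten, so $\eta_{\tria_k}^2(\tria_k) = 2^{-\alpha k}\eta_{\tria_0}^2(\tria_0)$. The identity $\delta(\tria_k)^2 = \eta_{\tria_k}^2(\tria_k)$ is immediate from the definition below \eqref{df:delta}.

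Third, for the supremum I would use the sequence $\{\tria_k\}$ itself as a test candidate. For any $N\in\setN$ the choice $\tria = \tria_N$ satisfies $\#\tria - \#\tria_0 = N$ and yields $\eta_\tria(\tria) = 2^{-\alpha N/2}\eta_{\tria_0}(\tria_0)$; therefore the minimum is at most this value, and $N^s \cdot 2^{-\alpha N/2}\eta_{\tria_0}(\tria_0)$ is bounded in~$N$ since exponentials dominate polynomials.

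No step here is a real obstacle; the essential observation is that the exponential decay in $g_0(\tria)$ is hard-wired into the definition \eqref{df:example}, and that repeatedly refining~$T_0$ increments~$g_0$ while leaving the right-hand partition of $[1,M+1]$ untouched. The only bookkeeping to be careful about is verifying this invariance together with $g_0(\tria_k)=k$ in the induction step.
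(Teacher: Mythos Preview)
Your proposal is correct and follows essentially the same approach as the paper: the paper's proof simply notes that $\#\mathcal{T}_k-\#\mathcal{T}_0=k=g(T_0(\mathcal{T}_k))$ and declares the rest an immediate consequence of~\eqref{df:example} and~\eqref{eq:refine}, which is precisely what you have unpacked in detail. One small point you leave implicit is that the extra children in $[0,1]$ (those not containing zero) have indicator zero by the ``else'' clause of~\eqref{df:example}, so the total estimator really is $\tfrac{K}{K-1}\,\eta_{\tria_k}^2(\tria_k|_{[1,M+1]})$; this is harmless but worth stating.
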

\begin{proof}
  Observing that 
  \begin{align*}
    \#\mathcal{T}_k-\#\mathcal{T}_0=k=g(T_0(\mathcal{T}_k)),
  \end{align*}
  the assertion 
  is an immediate consequence of~\eqref{df:example} and~\eqref{eq:refine}.
\end{proof}
\begin{remark}
  We are using in our setup the bisection method without conforming
  closure. This is just for the sake of a clear presentation. All
  observations remain valid if a conforming closure step is included.
\end{remark}

\subsection{Verifying the axioms}
\label{sec:better-axioms}

In order to verify the conditions~\ref{itm:A1}--\ref{itm:A4} as
well as~\ref{itm:A1'} and~\ref{itm:A4'},
we first observe the estimator defined in~\eqref{df:example} is
locally non-increasing under refinement.


\begin{lemma}[Monotonicity]
  \label{lem:loc-monotone}
 Let $\tria_\star\ge
  \tria$ such  that  $T\in\tria$ is bisected
  into~$\{T_1,T_2\}=\bisect{T}\subset\tria_\star$, then
  \begin{align*}
    \eta^2_{\trias}(T_1) +
    \eta^2_{\trias}(T_2) &\leq \eta_\tria^2(T).
  \end{align*}
  In particular, for all \(\mathcal{T}_\star\ge \mathcal{T}\), we have
  \(\eta_{\trias}(\trias)\leq \eta_{\tria}(\tria)\).
\end{lemma}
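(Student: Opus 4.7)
The plan is to do a case analysis on the location of the bisected element $T$ in $\tria$, unwinding the piecewise definition \eqref{df:example}. There are three possibilities for $T$: (i) $T\subset[m,m+1]$ for some $m\ge 1$; (ii) $T=T_0(\tria)$; or (iii) $T$ lies strictly in $[0,1]\setminus\set{0}$, i.e., the ``else'' branch. I would first establish the pointwise inequality in each of these three cases, and then deduce the global statement $\eta_{\trias}(\trias)\le\eta_{\tria}(\tria)$ by induction along any sequence of single bisections joining $\tria$ to $\trias$. The key bookkeeping device for the induction is the identity $\eta^2_\tria(\tria)=\frac{K}{K-1}\eta^2_\tria(\tria|_{[1,M+1]})$, which follows directly from \eqref{eq:T_0=1/M} and \eqref{df:example}, and which isolates the nonlocal root contribution from the interior one.

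For case (i), the bisection does not touch the element containing $0$, so $g_0(\trias)=g_0(\tria)$ and $\tria|_{[1,M+1]}\setminus\set{T}$ is unchanged in $\trias$; plugging $g(T_i)=g(T)+1$ and $\abs{T_i}=\abs{T}/2$ into the first branch of \eqref{df:example} immediately yields $\eta^2_\trias(T_1)+\eta^2_\trias(T_2)=2^{-\beta}\,\eta^2_\tria(T)$. For case (ii), one child $T_1$ contains $0$ and becomes $T_0(\trias)$, while the other child $T_2\subset[0,1]\setminus\set{0}$ falls in the else branch and contributes $\eta^2_\trias(T_2)=0$; the collections $\trias|_{[1,M+1]}$ and $\tria|_{[1,M+1]}$ coincide as sets of intervals, but every one of their indicators rescales by $2^{-\alpha}$ because $g_0(\trias)=g_0(\tria)+1$ appears in the exponent, and then the root-element definition in \eqref{df:example} gives $\eta^2_\trias(T_1)=\frac{1}{K-1}\eta^2_\trias(\trias|_{[1,M+1]})=2^{-\alpha}\eta^2_\tria(T_0(\tria))$. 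Case (iii) is trivial, since both children sit in the else branch and all three estimators vanish. In every case the factor picked up is at most $1$, proving the local inequality.

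For the global statement, induction reduces everything to a single bisection, and by the identity $\eta^2_\tria(\tria)=\frac{K}{K-1}\eta^2_\tria(\tria|_{[1,M+1]})$ it suffices to show that $\eta^2_\tria(\tria|_{[1,M+1]})$ is non-increasing under one bisection. This is immediate from the previous cases: in (i) a single interior contribution shrinks by $2^{-\beta}$ while all other interior contributions are preserved (because $g_0$ and the other $g(T')$ and $\abs{T'}$ are preserved); in (ii) the whole interior sum rescales uniformly by $2^{-\alpha}$; in (iii) nothing changes.

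The main subtlety, and the thing to be careful about, is exactly the nonlocal character of $\eta^2_\tria(T_0(\tria))$ in \eqref{df:example}: bisecting an interior element silently lowers the root estimator too, while bisecting the root $T_0$ propagates through the $g_0$ exponent to rescale \emph{every} interior estimator. Packaging the total as $\frac{K}{K-1}\eta^2_\tria(\tria|_{[1,M+1]})$ at the inductive step is the device that keeps this accounting transparent; without it, one would otherwise be tempted to just sum the local inequalities, overlooking the simultaneous change of the root term.
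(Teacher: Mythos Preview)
Your argument is essentially correct and follows the same case split as the paper, but there is one point of imprecision. In cases (i) and (ii) you tacitly assume that $\trias$ differs from $\tria$ by a \emph{single} bisection: in (i) you assert $g_0(\trias)=g_0(\tria)$, and in (ii) you assert that $\trias|_{[1,M+1]}$ and $\tria|_{[1,M+1]}$ coincide as sets of intervals. Neither holds for a general refinement $\trias\ge\tria$ that merely contains $T_1,T_2$, and the lemma's first claim is stated at that generality. So your case analysis as written proves the local inequality only for single bisections, not the full statement.

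The repair is immediate and is what the paper does: in case (i) replace the equality by $\eta^2_\trias(T_1)+\eta^2_\trias(T_2)\le 2^{-\beta}\eta^2_\tria(T)$, using only $g_0(\trias)\ge g_0(\tria)$ and $\alpha\ge 0$; in case (ii) replace the claimed identity by the chain
\[
\eta^2_\trias(T_0(\trias))=\tfrac{1}{K-1}\,\eta^2_\trias(\trias|_{[1,M+1]})
\;\le\;\tfrac{1}{K-1}\,\eta^2_\tria(\tria|_{[1,M+1]})
=\eta^2_\tria(T_0(\tria)),
\]
where the middle inequality is interior monotonicity, which follows from the corrected case (i) together with $\eta^2_\trias(\tilde T)\le\eta^2_\tria(\tilde T)$ for every unrefined $\tilde T\in\tria\cap\trias$. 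With these inequalities in place the local statement holds for general $\trias$, and your inductive route to the global estimate via the identity $\eta^2_\tria(\tria)=\tfrac{K}{K-1}\eta^2_\tria(\tria|_{[1,M+1]})$ goes through unchanged; in fact it is more explicit than the paper, which simply ends after the case analysis and leaves the ``in particular'' to the reader.
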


\begin{proof}
  We consider first the case $0\not\in T$, then 
  \begin{align}\label{eq:0inrefd}
    \eta^2_\trias(T_i) 
    &\leq 2^{-1-\beta}
      \eta^2_\tria(T), \qquad i=1,2.
      \end{align}
      Therefore, we conclude from \(\alpha\ge0\) that
      \begin{align*}
      \eta^2_\trias(T_1)+\eta^2_\trias(T_2)&\le   2\,2^{-1-\beta}
      \eta^2_\tria(T)  = 2^{-\beta}
      \eta^2_\tria(T)\le \eta^2_\tria(T).
  \end{align*}

  Assume now that $0\in T$ (i.e. $T=T_0(\mathcal{T})$). We first
  observe  for unrefined elements
  \begin{align}\label{eq:unrefd}
    \tilde
    T\in\tria\cap\trias\qquad\Rightarrow\qquad\eta^2_\trias(\tilde T) 
    &\leq
      \eta^2_\tria(\tilde T).
  \end{align}
  W.l.o.g. let $0\in
  T_1$, then we have from~\eqref{eq:unrefd} and
  \eqref{eq:0inrefd}, that \(\eta^2_\trias(T_2)=0\) and thus
  \begin{align}\label{refT0}
    \begin{aligned}
      \eta^2_\trias(T_1) +\eta^2_\trias(T_2)
      &=\eta^2_\trias(T_1) =\frac1{K-1}\,
      \eta^2_\trias(\trias|_{[1,M+1]})
      \\
      &\le 2^{-\alpha} \frac1{K-1}\,
      \eta^2_\tria(\tria|_{[1,M+1]})
      \\
      &\le \frac1{K-1}\, \eta^2_\tria(\tria|_{[1,M+1]})=\eta^2_\tria(T_0(\mathcal{T}))
    \end{aligned}
  \end{align}
  This finishes the proof.
\end{proof}

We are now in the position to verify the axioms of adaptivity from Section~\ref{sec:axioms}.
\begin{enumerate}[label={\textbf{(A\arabic{*})}}]
\item \textbf{Stability.}
  We recall  from~\eqref{eq:unrefd} that $\eta^2_\trias(T)
  \geq \eta^2_\tria(T)$ 
  for each unrefined~$T\in \tria\cap\trias$.   Moreover, by the local
  monotonicity (Lemma~\ref{lem:loc-monotone}), we have 
  $\eta^2_{\trias}(\trias \setminus \tria) \leq \eta^2_{\tria}(\tria
  \setminus \trias)$ and therefore
  \begin{align*}
    \lefteqn{\abs{  \eta^2_{\tria}(\tria \cap \trias) -  \eta^2_{\trias}(\tria
    \cap \trias)}} \qquad
    \\
    &=\eta^2_{\tria}(\tria \cap \trias) -  \eta^2_{\trias}(\tria
    \cap \trias)
    \\
    &\leq  \eta^2_{\tria}(\tria \cap \trias) -  \eta^2_{\trias}(\tria
      \cap \trias) +  \eta^2_{\tria}(\tria \setminus \trias) -  \eta^2_{\trias}(\trias
      \setminus \tria)
    \\
    &=  \eta^2_{\tria}(\tria) -  \eta^2_{\trias}(\trias)
    \\
    &= \delta^2(\tria,\trias).
  \end{align*}
  This and $\abs{a-b} \leq \sqrt{\abs{a^2-b^2}}$ for $a\ge b \ge0$ imply~\ref{itm:A1}.
 
\item \textbf{Reduction.}
  Assume first, that $T_0(\tria)$ is not refined in
  $\trias\ge\tria$.  Then we have 
  \begin{align*}
    \eta^2_\trias(\trias \setminus \tria) &\leq 2^{-\beta}
    \eta^2_\tria(\tria \setminus \trias) .
  \end{align*}
  If on the other hand $T_0(\tria)$ is refined in $\trias$, then each
  estimator is at least reduced 
  by the factor~$2^{-\alpha}$, and thus similar to~\eqref{refT0}, we obtain
  \begin{align*}
    \eta^2_\trias(\trias \setminus \tria) &\leq 2^{-\alpha}
    \eta^2_\tria(\tria \setminus \trias) .
  \end{align*}
  Thus in both cases we conclude~\ref{itm:A2} with
  $\rho=2^{-\frac{\min\set{\alpha,\beta}}2}$.
\item \textbf{Discrete reliability.}
  Assume first that \(T_0(\tria)\) is not refined in \(\trias\ge\tria\), i.e. $T_0(\tria)\in\trias$. Then 
  $\eta^2_\tria(\tria \cap
  \trias) = \eta^2_\trias(\tria \cap \trias)$ and 
  \begin{align*}
    \delta^2(\tria,\trias)
    &= \eta^2_\tria(\tria) - 
      \eta^2_\trias(\trias)
    \\
    &= \eta^2_\tria(\tria \setminus \trias) + \eta^2_\tria(\tria \cap
      \trias) - 
      \eta^2_\trias(\trias \setminus \tria ) - \eta^2_\trias(\tria \cap
      \trias)
    \\
    &= \eta^2_\tria(\tria \setminus \trias)  - 
      \eta^2_\trias(\trias \setminus \tria)
    \\
    &\leq \eta^2_\tria(\tria \setminus \trias)
  \end{align*}
  
  If otherwise $T_0(\tria)\in\tria\setminus\trias$, then we obtain
  with~\eqref{df:example} that
  \begin{align*}
    \delta^2(\tria,\trias)
    &\leq \eta^2_\tria(\tria)
    \\
    &= K \,\eta^2_\tria(T_0(\tria))
    \\
    &\leq K\, \eta^2_\tria(\tria \setminus \trias).
  \end{align*}
  In other words, we have~\ref{itm:A3} with \(C_3=\max\{1,K\}=K\).

\item \textbf{Quasi-orthogonality.}
  For a sequence of nested meshes $
  \tria_1\le\tria_2\le\cdots$, in \(\bbT\), we have 
  \begin{align*}
    \sum_{k=1}^N \delta(\mathcal{T}_{k+1}, \tria_k)^2&=\sum_{k=1}^N
                                                       \eta_{\tria_k}^2 (\tria_k)-\eta_{\tria_{k+1}}^2 (\tria_{k+1})
                                                       \\
                                                     &=\eta_{\tria_1}^2 (\tria_1)-\eta_{\tria_{N+1}}^2 (\tria_{N+1})
    \\
    &\le \eta_{\tria_1}^2 (\tria_1).
  \end{align*}
  Taking the limit \(N\to\infty\) and observing from Lemma~\ref{lem:loc-monotone} that
  \(\delta(\mathcal{T}_{k+1}, \tria_k)^2\ge0\), we
  conclude~\ref{itm:A4} with \(C_4=1\).
\end{enumerate}

Also~\ref{itm:A1'} and~\ref{itm:A4'} are satisfied by the error
indicators.
\begin{enumerate}[label={\textbf{(A\arabic*')}},start=1]
\item \textbf{Efficiency.} For \(\tria\in\bbT\), we have from
  Lemma~\ref{l:exp-conv} that
  \begin{align*}
    \delta(\mathcal{T})=\eta_{\mathcal{T}}(\mathcal{T}),\qquad\text{i.e.,}\quad
    C_1=1.
  \end{align*}
  \setcounter{enumi}{3}
\item \textbf{Orthogonality.} Indeed, from~\eqref{df:delta}, we
  have for \(\tria,\tria_\star,\tria_{\circ}\in\bbT\) with \(\tria\le\tria_\star\le\tria_{\circ}\) that 
  \begin{align*}
    \delta(\mathcal{T}_{\circ},\mathcal{T}_\star)^2+\delta(\mathcal{T}_\star,\mathcal{T})^2
    &=\eta_{\tria_\star}^2(\tria_\star) -
      \eta_{\tria_{\circ}}^2(\tria_{\circ}) + \eta_{\tria}^2(\tria) -
      \eta_{\tria_{\star}}^2(\tria_{\star})
    \\
    &=\eta_{\tria}^2(\tria)-\eta_{\tria_{\circ}}^2(\tria_{\circ})
    =\delta(\mathcal{T}_\circ,\mathcal{T})^2.
  \end{align*}
\end{enumerate}
Concluding, we have that Theorem~\ref{thm:optimality} and
Theorem~\ref{thm:optRob} apply with the thresholds
\begin{align}\label{eq:thresholds}
  \theta_\star=\frac1{K+1} \qquad\text{and}\qquad \tilde
  \theta_\star=\frac1{K},
\end{align}
respectively.

\begin{remark}\label{rem:context}
  We have verified that our indicators~$\eta$ and the distance
  function~$\delta$ satisfies all stated axioms of
  adaptivity. Nevertheless, we suspect that our
  example~\eqref{df:example} can be realised within the context of
  finite elements for differential equations, as is suggested by
  the following example.
  
  Let \(
  a:\Omega=(0,M)\to \setR_{>}\) piecewise constant with respect to
  \(\mathcal{T}_0\). We consider the following one dimensional
  problem: For \(f\in
  H^{-1}(\Omega)\), find \(u\in H_0^1(\Omega)\) such that
  \begin{align*}
    \forall v\in H_0^1(\Omega)\qquad \int_0^M a\,u'v'\dx =\langle f, v\rangle.
  \end{align*}
  For \(\mathcal{T}\in \bbT\), we chose \(\mathbb{V}(\mathcal{T})\defas \{v\in H_0^1(\Omega)\colon
  v|_{T}\in\mathbb{P}_k, T\in \mathcal{T}\}\) and define
  \(u\in{\mathcal{T}}\in\mathbb{V}(\mathcal{T})\) to be the Galerkin
  approximation of \(u\) in \(\mathbb{V}(\mathcal{T})\). Recalling
  \(H_0^1(\Omega)\hookrightarrow C_0(\bar\Omega)\) since \(d=1\), we
  have that the Lagrange interpolant is stable. Using this,
  standard a posteriori techniques readily show that 
  \begin{align*}
    \int_T
    a(u'-u_{\mathcal{T}}')^2\dx=\frac1{a_{|T}}\|f+(a
    u_{\mathcal{T}}'')\|^2_{H^{-1}(T)}
    \quad\forall
    T\in \mathcal{T}.
  \end{align*}
  An error indicator is then typically obtained by estimating the
  local residuals on the right hand side in a computable way. However,
  their relation to the error is purely local and therefore a
  dependence of the local indicators on the generation
  \(g(T_0(\mathcal{T}))\) as in~\eqref{df:example} is not possible. 
\end{remark}

\subsection{D\"orfler marking}
\label{sec:qo-dorfler}

We recall that $K>1$ is just our reliability constant, i.e. $C_3
=K$, which is related to the threshold by
\begin{align*}
  \tilde{\theta}_\star &= \frac{1}{C_3} = \frac 1K.
\end{align*}


\begin{theorem}\label{thm:example}
  Let $\theta \in (0,1)$ (the D\"orfler parameter) and $s_0>0$ (the
  rate) be given. Then there exist \(\alpha,\beta>0\), \(M\in\setN\),
  and 
  \begin{align*}
    \frac1K=\tilde{\theta}_\star<\theta\quad\text{arbitrary close},
  \end{align*}
  such that Algorithm~\ref{alg:AFEM} with
  optimal D\"orfler marking fails to converge with rate~$s_0$, i.e.
  \begin{align*}
    \sup_{k\in\setN}
    (\#\mathcal{T}_k-\#\mathcal{T}_0)^{s_0}\eta_{\mathcal{T}_k}(\mathcal{T}_k)
    = 
    \sup_{k\in\setN}
    (\#\mathcal{T}_k-\#\mathcal{T}_0)^{s_0}\delta(\mathcal{T}_k)
    = \infty.
  \end{align*}
\end{theorem}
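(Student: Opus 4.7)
The plan is to exhibit, for any given marking parameter $\theta \in (0,1)$ and any rate $s_0 > 0$, an explicit tuning of the four parameters $\alpha$, $\beta$, $M$, $K$ such that $\tilde\theta_\star = 1/K$ sits just below $\theta$ and the sequence $(\tria_k)_k$ produced by Algorithm~\ref{alg:AFEM} suffers $(\#\tria_k - \#\tria_0)^{s_0}\eta_{\tria_k}(\tria_k)\to\infty$ along a subsequence. The key intuition is that because $\theta > 1/K$ forces the marking to drain an additional $\mu := (\theta K-1)/(K-1)$ fraction of mass out of $\tria_k|_{[1,M+1]}$ beyond the contribution of $T_0$, and because this mass is spread over many small elements, the optimal marking must refine a growing family of them at every step, and this family will \emph{double} at each successive refinement ``layer''.

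I would first verify that optimal D\"orfler marking always contains $T_0(\tria_k)$: the identity~\eqref{eq:T_0=1/M} shows $T_0$ carries the fraction $1/K$ of the total estimator, whereas any individual element of $\tria_k|_{[1,M+1]}$ contributes at most $(K-1)/(KS)$ of the total, with $S := \sum_{m=1}^M 2^{-\beta(m-1)/M}$. Hence for $M$ large $T_0$ is the strict maximiser and the greedy minimal-cardinality marking must pick it first. Then I would describe the remaining marking in \emph{phases}: phase $j \geq 1$ is the interval of steps during which the largest elements in $\tria_k|_{[1,M+1]}$ have generation $j-1$ (with respect to bisection of the initial $[m,m+1]$). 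A generation-$(j-1)$ element in $[m,m+1]$ has mass $2^{-\alpha g_0(\tria_k) - (\beta+1)(j-1) - \beta(m-1)/M}$, so it represents only the fraction $2^{-(j-1)}/S$ of the available mass in $\tria_k|_{[1,M+1]}$; covering the required $\mu$-fraction therefore demands $\nu_j \approx 2^{j-1}\mu S$ elements per step, and phase $j$ lasts $D := M/(\mu S)$ steps, a quantity independent of $j$ (since the pool of $M\cdot 2^{j-1}$ available generation-$(j-1)$ elements is depleted at rate $\nu_j$ per step).

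Next I would carry out the accounting at the end of the $J$-th phase, i.e.\ at step $k_J := JD$. The total count satisfies $\#\tria_{k_J} - \#\tria_0 = k_J + \sum_{j=1}^J D\cdot\nu_j = k_J + M(2^J-1)$, which for $J$ large is of order $M\cdot 2^J$. Combining the $2^{-\alpha}$ factor from each $T_0$-refinement with the $2^{-\beta}$ factor accumulated per completed phase yields $\eta^2_{\tria_{k_J}}(\tria_{k_J}) = \tfrac{K}{K-1}\cdot 2^{-\alpha k_J - \beta J}\,S$; hence I expect
\begin{equation*}
 (\#\tria_{k_J} - \#\tria_0)^{s_0}\eta_{\tria_{k_J}}(\tria_{k_J}) \;\gtrsim\; M^{s_0}\sqrt{S}\cdot 2^{J\bigl(s_0 - (\alpha D + \beta)/2\bigr)},
\end{equation*}
which diverges as $J\to\infty$ precisely when $\alpha D + \beta < 2s_0$. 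In the relevant regime $S \sim M/(\beta\ln 2)$ for large $M$, this reduces to $\alpha/\mu + \beta < 2s_0$. Given $\theta$ and $s_0$, I would therefore choose $K > 1/\theta$ (making $\tilde\theta_\star = 1/K$ as close to $\theta$ as wished and $\mu > 0$), then $\beta < s_0$, then $\alpha < \mu s_0$, and finally $M$ large enough for the asymptotic estimates to apply; these choices deliver the claimed $\sup_k = \infty$.

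The main obstacle will be a rigorous justification of the phase picture: that greedy minimal-cardinality marking really proceeds layer-by-layer in the way described, that the marked mass remains within constant factors of $\mu\,\eta^2_{\tria_k}(\tria_k|_{[1,M+1]})$ throughout each phase (so that $\nu_j$ is correctly identified up to harmless multiplicative constants), and that the transition from phase $j$ to phase $j+1$ does not disrupt either the geometric doubling $\nu_j = 2^{j-1}\mu S$ or the per-phase estimator decay $2^{-(\alpha D+\beta)}$. The monotone weights $2^{-(\beta+1)g(T) - \beta(m-1)/M}$ essentially dictate the greedy order through the pair $(g(T), m)$, which should make this bookkeeping tractable and confine any error to constants that do not affect the $J\to\infty$ blow-up.
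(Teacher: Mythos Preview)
Your approach is sound in spirit and would succeed once the phase bookkeeping is carried out, but the paper takes a markedly cleaner route that you overlooked. Rather than fixing $K>1/\theta$ first and then wrestling with an approximate count $\nu_j\approx 2^{j-1}\mu S$ and a non-integer phase length $D=M/(\mu S)$, the paper couples the choice of $K$ and $M$ through the single relation
\[
  \frac{1}{K}+\frac{1}{S(\beta,M)}\Big(1-\frac{1}{K}\Big)=\theta,
  \qquad S(\beta,M)=\sum_{j=1}^{M}2^{-\beta(j-1)/M},
\]
which in your notation is exactly $\mu S=1$. With this tuning the optimal D\"orfler set at step $k$ is \emph{exactly} $\{T_0(\tria_k)\}\cup\tria_k|_{I_k}$, one whole macro interval at a time; the marked mass equals $\theta\,\eta^2_{\tria_k}(\tria_k)$ on the nose, the phase length is $D=M$, and $\nu_j=2^{j-1}$. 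All the issues you flag as the ``main obstacle'' (greedy order, integrality, constant-factor drift at phase transitions) simply disappear, because the cyclic structure $I_1,\dots,I_M$ is self-similar under the algorithm. The arbitrarily-close threshold then comes from $S(\beta,M)\to\infty$ as $M\to\infty$, which forces $\gamma=\theta-1/K$ to be small.

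What each approach buys: yours keeps $K$ a free parameter and shows that any $K>1/\theta$ works for suitable $\alpha,\beta,M$, at the cost of the approximate phase analysis. The paper's tuning sacrifices this freedom (only certain discrete pairs $(K,M)$ are admitted) but reduces the proof to the explicit identity $\eta^2_{\tria_{\ell M}}(\tria_{\ell M})=2^{-(\alpha M+\beta)\ell}\eta^2_{\tria_0}(\tria_0)$ with $\#\tria_{\ell M}-\#\tria_0\ge 2^{\ell}M$, after which $\beta=s_0$ and $\alpha<s_0/M$ finish the job in two lines. Your asymptotic $S\sim M/(\beta\ln 2)$ is also slightly off (the correct leading constant is $(1-2^{-\beta})/(\beta\ln 2)$), though this does not affect your conclusion.
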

\begin{proof}
  For an arbitrary fixed 
  $\epsilon>0$ we will determine parameters $K,\alpha,\beta$ and $M$
  such that \(\tilde{\theta}_\star=\frac1K \) satisfies
  \begin{align*}
    \tilde{\theta}_\star< \theta < \tilde{\theta}_\star + \epsilon,
  \end{align*}
  i.e.  for some \(\gamma\in(0,\epsilon)\)
  \begin{align}\label{eq:theta-eps}
    \theta=\tilde{\theta}_\star+\gamma=\frac1K+\gamma\qquad\text{or equivalently}\qquad
    K= \frac1{\theta-\gamma}.
  \end{align}
  The constants \(\alpha,\beta>0\) are related to the 
  rate \(s_0\). We fix \(\beta={s_0}>0\) and 
  determined \(\alpha\) at the end of the proof.
  
  In order to introduce the general idea of the proof, we define
  \begin{align*}
    I_{k} := \big[\big((k-1) \text{ mod } M\big)+1, \big((k-1)\text{ mod }
    M \big)+2\big].
  \end{align*}
  Therefore, for any \(j\in\setN\), we have that $I_{0+j}, I_{1+j}, \dots, I_{M-1+j}$ represent the intervals
  $[1,2],\dots, [M,M+1]$ with order shifted by~$j$.
  Below, we will adjust the parameters such that in each iteration
  \(k=0,1,2,\ldots\), the set 
  \begin{align}\label{eq:Mk}
    \mathcal{M}_k &= \set{T_0(\mathcal{T}_k)} \cup \set{T \in
                    \mathcal{T}_k \,:\, T \in I_k}
  \end{align}
  satisfies \emph{optimal D\"orfler
  marking}. In fact, we will have \(\mathcal{M}_k\subset\mathcal{T}_k\) such that
  \(\#\mathcal{M}_k\) is minimal with the property
  \begin{align}
    \label{eq:need-opt-Doerfler}
    \eta^2_{\mathcal{T}_k}(\mathcal{M}_k)
    &= \theta
      \eta^2_{\mathcal{T}_k}(\mathcal{T}_k).
  \end{align}

  Consider first $k=0$.
  It follows from~\eqref{eq:T_0=1/M} that
  \begin{align*}
    \eta^2_{\mathcal{T}_0}(T_0(\mathcal{T}_0))
    &= \frac 1K
      \eta^2_{\mathcal{T}_0}(\mathcal{T}_0),
    \\
    \eta^2_{\mathcal{T}_0}(\mathcal{T}_0|_{[1,M+1]})
    &= \bigg(1-\frac 1K\bigg)
      \eta^2_{\mathcal{T}_0}(\mathcal{T}_0).
  \end{align*}
  Moreover, we have from the definition of our
  indicators~\eqref{df:example} for \(M\in\setN\), that
  \begin{align}\label{eq:eta-decay}
    \eta^2_{\mathcal{T}_0}(\mathcal{T}_0|_{I_j})
    &= 2^{-\beta
      \frac{j-1}M}
      \eta^2_{\mathcal{T}_0}(\mathcal{T}_0|_{[1,2]}), \qquad\text{for
      $j=1,\dots, M$.} 
  \end{align}
  Consequently, it follows from~\eqref{eq:T_0=1/M} that 
  \begin{align*}
    \eta^2_{\mathcal{T}_0}(\mathcal{T}_0)&=K
    \eta^2_{\mathcal{T}_0}(T_0(\mathcal{T}_0))=\frac{K}{K-1}\eta^2_{\mathcal{T}_0}(\mathcal{T}_0|_{[1,M+1]})
    \\
    &=\frac{K}{K-1}\sum_{j=1}^M 2^{-\beta \frac{j-1}{M}}
      \eta^2_{\mathcal{T}_0}(\mathcal{T}_0|_{[1,2]})=\frac{K}{K-1}
      S(\beta,M) \eta^2_{\mathcal{T}_0}(\mathcal{T}_0|_{[1,2]}),
  \end{align*}
  where 
  \begin{align*}
    S(\beta,M) &:= \sum_{j=1}^M 2^{-\beta \frac{j-1}{M}} =
                \frac{1-2^{-\beta}}{1-2^{-\frac \beta M}}.
  \end{align*}
  In other words
  \begin{align}\label{etaR=Seta12}
    \eta^2_{\mathcal{T}_0}(\mathcal{T}_0|_{[1,2]})
    &=
      \frac{\eta^2_{\mathcal{T}_0}(\mathcal{T}_0|_{[1,M+1]})}{S(\beta,M)}
      = \frac{1}{S(\beta,M)} \bigg(1-\frac 1K\bigg)
      \eta^2_{\mathcal{T}_0}(\mathcal{T}_0),
  \end{align}
  and thus the D\"orfler marking
  condition~\eqref{eq:need-opt-Doerfler} reduces to finding
  \(K>1\) and \(M\in\setN\) with
  \begin{align*}
    \frac 1 K + \frac{1}{S(\beta,M)} \bigg( 1 - \frac 1K \bigg) &= \theta
  \end{align*}
  or equivalently (recall~\eqref{eq:theta-eps})
  \begin{align}
    \label{eq:doerfler-equiv1}
    S(\beta,M) &= \frac{1 - \frac 1K}{\theta - \frac 1K}=\frac{1-\theta + \gamma}{\gamma} = \frac1\gamma(1-\theta)+1.
  \end{align}
  Since \(\beta>0\), we have \(S(\beta,M)=1\) and \(\lim_{M \to \infty} S(\beta,M) =
  \infty\) and thus there exist \(M\in\setN\) and
  \(\gamma\in (0,\epsilon)\) satisfying~\eqref{eq:doerfler-equiv1}.

  Overall, 
  thanks to~\eqref{eq:eta-decay} and the fact that
  \(\#\mathcal{T}_0|_{I_0}=\#\mathcal{T}_0|_{I_1}=\cdots=\#\mathcal{T}_0|_{I_{M-1}}=1\), 
  for \(\beta=s_0>0\), we have fixed the parameters $M\in\setN$ and
  \(K>1\), such that~\eqref{eq:theta-eps}
  and~\eqref{eq:doerfler-equiv1} hold. This implies in
  particular optimal D\"orfler marking~\eqref{eq:doerfler}
  for~$k=0$.

  We shall now deal with the case \(k>0\) and let \(k=\ell M+m\in\setN\) with \(\ell\in\setN_0\) and
  \(m\in\{0,\ldots,M-1\}\). It is easy to see from~\eqref{eq:Mk} that
  \begin{align}\label{eq:gen}
    g(T)=
    \begin{cases}
      \ell+1,&\text{if}~T\subset I_{j}~\text{for some}~j\in\{0,\ldots,m-1\}
      \\
      \ell,&\text{if}~T\subset I_{j}~\text{for some}~j\in\{m,\ldots,M-1\}.
    \end{cases}
  \end{align}
  Consequently, we have
  \begin{align*}
    \eta^2_{\mathcal{T}_k}(\mathcal{T}_k|_{I_{m+j}})
    &= 2^{-\beta
      \frac{j-1}M}
      \eta^2_{\mathcal{T}_k}(\mathcal{T}_k|_{I_m}), \qquad\text{for
      $j=1,\dots, M$.} 
  \end{align*}
  Therefore, the relative sizes of the indicators on the intervals
  \(I_{m+j}\) correspond to a cyclic permutation of the initial situation in
  \eqref{eq:eta-decay}. In other words we
  have~\eqref{eq:need-opt-Doerfler}.
  Note that \(I_m=I_k\) by construction and thus
  \begin{align*}
  \eta^2_{\mathcal{T}_k}(\mathcal{T}_k|_{I_k})\ge
  \eta^2_{\mathcal{T}_k}(\mathcal{T}_k|_{I_j}),\qquad j\in\{0,\ldots,M-1\}.
  \end{align*}
  Moreover, it follows from~\eqref{eq:gen} that
  \begin{align*}
    \#\mathcal{T}_k|_{I_k}\le \#\mathcal{T}_k|_{I_j},\qquad j\in\{0,\ldots,M-1\}
  \end{align*}
  and thus the D\"orfler marking is again minimal.

  We turn now to investigate the rate of the algorithm so to fix
  \(\alpha\).  After each $M$ iterations in Algorithm~\ref{alg:AFEM} 
  each element of $[1,M]$ is refined once. Thus, for all
  $\ell \in \setN$
  \begin{align*}
    \#\mathcal{T}_{\ell M} - \# \mathcal{T}_0 \geq 2^\ell
    M.    
  \end{align*}
  Moreover, after $M$ algorithm cycles the element containing zero is
  $M$ times refined and all elements in~$[1,M]$ are refined
  once. Thus, the error indicator of the whole partition decreases
  after $M$ cycles by $2^{-\alpha M -\beta}$, i.e.
  \begin{align*}
    \eta^2_{\mathcal{T}_{\ell M}}(\mathcal{T}_{\ell M})
    &=
      2^{(-\alpha M-\beta)
      \ell}
      \eta^2_{\mathcal{T}_0}(\mathcal{T}_0). 
  \end{align*}
  Therefore, we have with \(\beta=s_0\) that 
  \begin{align*}
    (\# \mathcal{T}_{\ell M} - \# \mathcal{T}_0)^{s_0}
    \eta_{\mathcal{T}_{\ell M}}(\mathcal{T}_{\ell M})
    &\geq M^{s_0} 2^{(s_0  - \frac{\alpha}{2} M- \frac{\beta}{2})\ell}
     =M^{s_0} 2^{(\frac{s_0}2  - \frac{\alpha}{2} M)\ell}.
  \end{align*}
  Choosing  $\alpha\in(0,\frac{s_0}M)$, we have  
  $\frac{s_0}2  - \frac{\alpha}{2} M>0$ and thus
  \begin{align*}
    \sup_{k\in\setN}
    (\#\mathcal{T}_k-\#\mathcal{T}_0)^{s_0}\eta_{\mathcal{T}_k}(\mathcal{T}_k)
    = \infty.
  \end{align*}
  This finishes the proof.
\end{proof}

\subsection{Maximums Strategy}
\label{sec:maximums-strategy}

Another popular refinement strategy is the \emph{maximum
  strategy}. For this the D\"orfler marking~\eqref{eq:doerfler} in
Algorithm~\ref{alg:AFEM} is replaced by 
\begin{align}
  \label{eq:max}
  \mathcal{M}_k:=\big\{T\in\mathcal{T}_k\colon
  \eta_{\mathcal{T}_k}^2(T)\ge \mu
  \max\{\eta_{\mathcal{T}_k}^2(T')\colon T'\in\mathcal{T}_k\}\big\},
\end{align}
for some marking parameter $\mu \in (0,1]$. The strategy requires to
determine the maximal local indicator. Then all elements with
indicators that are up to the factor \(\mu\) maximal are refined.
Obviously, the strategy is getting more selective as closer \(\mu\)
is to one.

The maximum strategy has been analyzed
in~\cite{DieningKreuzerStevenson:2016} and it has been shown that for
any~$\mu \in (0,1]$ the algorithm is \emph{instance
  optimal}.
The term \emph{instance optimality} means that the algorithm
produces meshes with up to a fixed
constant optimal cardinality relativ to the achieved energy
error.  Different from
the D\"orfler marking strategy there is no restriction on
the marking parameter~$\mu$, i.e., all
$\mu \in (0,1]$ are admissible for \emph{instance optimality}.

Let us briefly analyze how the maximum strategy will perform for the
setup of Subection~\ref{sec:setup}. 
It may actually happen in the first iterations
  that elements in \([1,M+1]\) are refined. However, these elements
  are getting then smaller relative to 
  \(\eta^2_{\mathcal{T}_k}(T_0(\mathcal{T}_k))\) due to bisection, thanks to the fact
  that \(|T|=2^{-g(T)}\). Therefore, eventually all
  elements in \([1,M+1]\) are smaller than $\mu\,
  \eta^2_{\mathcal{T}_k}(T_0(\mathcal{T}_k))$. From that point on
only~$T_0(\mathcal{T}_k)$ will be refined and we obtain 
exponential convergence similar as in Lemma~\ref{l:exp-conv}.

This confirms the expected perfomance of the maximum strategy.

\providecommand{\bysame}{\leavevmode\hbox to3em{\hrulefill}\thinspace}
\providecommand{\MR}{\relax\ifhmode\unskip\space\fi MR }
\providecommand{\MRhref}[2]{%
  \href{http://www.ams.org/mathscinet-getitem?mr=#1}{#2}
}
\providecommand{\href}[2]{#2}

\end{document}